\newcommand{\R}{\mathbb{R}}
\newcommand{\Q}{\mathbb{Q}}
\newcommand{\N}{\mathbb{N}}
\newcommand{\NN}{\mathcal{N}}
\newcommand{\FF}{\mathcal{F}}
\newcommand{\KK}{\mathcal{K}}
\newcommand{\U}{{\mathcal U}}
\newcommand{\gL}{\Lambda}
\newcommand{\po}{\partial}
\newcommand{\wto}{\rightharpoonup}
\newcommand{\ve}{\varepsilon}
\newcommand{\loc}{{\text{loc}}}
\newcommand{\X}{\times}
\renewcommand{\d}{\delta}
\renewcommand{\l}{\lambda}
\renewcommand{\a}{\alpha}
\newcommand{\s}{\sigma}
\newcommand{\g}{\gamma}
\newcommand{\Om}{\Omega}
\newcommand{\om}{\omega}
\newcommand{\supp}{\text{\rm supp}\,}
\newcommand{\re}{\mathbb{R}}
\newcommand{\M}{{\mathcal M}}
\renewcommand{\S}{{\mathcal S}}
\renewcommand{\subset}{\subseteq}
\newcommand{\K}{\mathcal{K}}
\newcommand{\BUC}{\operatorname{BUC}}
\newcommand{\AP}{\operatorname{AP}}
\newcommand{\BB}{{\mathcal B}}
\renewcommand{\AA}{{\mathcal A}}
\newcommand{\Id}{\operatorname{Id}}
\newcommand{\co}{\operatorname{co}}
\newcommand{\WAP}{\operatorname{{\mathcal W}AP}}
\newcommand{\APs}{\operatorname{{\mathcal W}^*\! AP}}
\newcommand{\oM}{{\operatorname{M}}}
\newcommand{\mm}{{\mathfrak m}}
\newcommand{\mv}[1]{M(#1)}
\newcommand{\medint}{{\mbox{\vrule height3.5pt depth-2.8pt
          width4pt}\mkern-13mu\int\nolimits}}
\newcommand{\Medint}{\mkern12mu\mbox{\vrule height4pt
         depth-3.2pt
          width5pt}\mkern-16.5mu\int\nolimits}
\renewcommand{\supp}{\operatorname{supp}}
\theoremstyle{plain}
\newtheorem{theorem}{Theorem}[section]
\newtheorem{lemma}{Lemma}[section]
\theoremstyle{definition}
\newtheorem{definition}{Definition}[section]
\theoremstyle{remark}
\newtheorem{remark}{Remark}[section]
\numberwithin{equation}{section}
\begin{document}

\title[Weakly* Almost Periodic Homogenization]
{A note on the stochastic weakly* almost periodic\\ homogenization of  fully nonlinear elliptic equations}
\author{Hermano Frid}
\address{Instituto de Matem\'atica Pura e Aplicada - IMPA\\ Estrada Dona Castorina, 110\\
Rio de Janeiro, RJ, 22460-320, Brazil}
\email{hermano@impa.br}
\keywords{homogenization,  two-scale convergence, weakly almost periodic functions}
\subjclass{Primary: 35B40, 35B35; Secondary: 35L65, 35K55}

\date{}

\maketitle

\rightline{\textit{Dedicated to Jo\~ao Paulo Dias on his 70th birthday}.\quad\quad\quad\quad }

\begin{abstract} A function $f\in \BUC(\R^d)$ is said to be weakly* almost periodic, denoted $f\in\APs(\R^d)$, if there is $g\in\AP(\R^d)$,  such that, $\oM(|f-g|)=0$, where $\BUC(\R^d)$ and 
$\AP(\R^d)$ are, respectively, the space of bounded uniformly continuous functions and the space of almost periodic functions, in $\R^d$, and $\oM(h)$ denotes the mean value of $h$, if it exists.    We  give a very simple direct proof of the stochastic homogenization property of the Dirichlet problem for fully nonlinear uniformly elliptic equations of the form $F(\om,\frac{x}{\ve},D^2u)=0$, $x\in U$, in a bounded domain $U\subset\R^d$, 
in the case where for almost all $\om\in \Om$, the realization $F(\om,\cdot,M)$ is  a weakly* almost periodic function, for all $M\in\S^d$,  where $\S^d$ is  the space of $d\X d$ symmetric matrices.  Here $(\Om,\mu,\FF)$ is a probability space with probability measure $\mu$  and $\s$-algebra $\FF$ of $\mu$-measurable subsets of $\Om$. For each fixed $M\in\S^d$, $F(\om,y,M)$ is a stationary process, that is, $F(\om,y,M)=\tilde F(T(y)\om,M):= F(T(y)\om,0,M)$, where $T(y):\Om\to\Om$ is an ergodic group of measure preserving mappings such that the mapping  $(\om,y)\to T(y)\om$ is measurable. Also, $F(\om,y,M)$, $M\in\S^d$,  is uniformly elliptic, with ellipticity constants $0<\l<\gL$ independent of $(\om,y)\in\Om\X\R^d$. The result presented here is a particular instance of the general theorem of Caffarelli, Souganidis and Wang, in CPAM 2005. Our point here is just to show a straightforward proof  for this special case, which serves as a motivation for that general theorem, whose proof involves much more intricate arguments.  We remark that  any continuous stationary  process verifies the property that almost all realizations belong to an ergodic algebra, and that $\APs(\R^d)$ is, so far, the greatest known ergodic algebra on $\R^d$. 
\end{abstract}

\section{Introduction} \label{S:0}

We consider the  homogenization of solutions to the following  Dirichlet problem for fully nonlinear elliptic equations 
\begin{align}
F(\om,\frac{x}{\ve},D^2 u)=0, &\quad x\in U \label{e1.1}\\
 u(x)=g(x),&\quad x\in\po U, \label{e1.2}
\end{align}
where $U\subset\R^n$ is a bounded domain with Lipschitz boundary, $g\in C(\po U)$ and $\om\in\Om$, where $(\Om,\mu,\FF)$ is a probability space, with probability measure $\mu$
and $\s$-algebra $\FF$ of $\mu$-measurable sets  $A\subset\Om$.  

Let $\S^d$ denote the space of $d\X d$ symmetric matrices. We assume that, for each $(y,M)\in\R^d\X\S^d$,  $F(\cdot ,y,M):\Om\to\R$ is a bounded $\mu$-measurable function, 
and $F:\Om\X\R^d\X \S^d\to \R$ is uniformly elliptic, that is, for certain constants $0<\l<\Lambda$,
\begin{equation}\label{e1.3}
\l \|N\|\le F(\om,y,M+N)-F(\om,y,M)\le \Lambda \|N\|,
\end{equation}
  for all  $(\om,y)\in \Om\X\R^d$, and all $M,N\in\S^d$, with $N\ge0$.

 We  first recall some well known results for  the deterministic case in which $F$ does not depend on the stochastic parameter $\om$. When, $F(\cdot,M)$ is periodic in $\R^d$,  for each $M\in\S^d$, the solution of this  problem follows from essentially the same ideas on the well know (never to be published) preprint by Lions, Papanicolaou and Varadhan~\cite{LPV} (see also \cite{Ev}).  Also, when $F(\cdot, M)$ is almost period in $\R^d$, the homogenization problem can be addressed using the same ideas as those employed by Ishii in \cite{I}. The adaptations of the ideas in \cite{I} for the case of fully nonlinear elliptic equations were described in \cite{CSW} and also reviewed in \cite{AF}. It is important to remark that, while the uniform convergence of the solutions of the Hamilton-Jacobi equations, both in \cite{LPV} and \cite{I}, is guaranteed by the uniform boundedness of $Du^\ve$, which follows from  the coerciveness of the nonlinearity, in the case of the fully nonlinear elliptic equations, the compactness of the solutions $u^\ve$ is obtained through an application of the H\"older continuous regularity theorem by Caffarelli~\cite{Ca} (see also \cite{CC}).  
 
As for the determination of the effective nonlinearity,  roughly speaking, we have the following. In the periodic case, the compactness of the torus $\Pi$ allows the solution of the corrector (or cell) equation 
 $$
 F(y,M+D_y^2 v)=\l,\quad y\in\Pi,
 $$
 for the definition of the effective Hamiltonian, $\bar F(M)$,  as the corresponding unique ``nonlinear eigenvalue'' $\l$ for which this equation possesses a solution. On the other hand,  
 in the almost periodic case,  since the domain of the corrector equation is now  the whole $\R^d$, following Ishii~\cite{I}, that equation is replaced by a family of $\d$-approximate corrector equations
 $$
\d v+  F(y,M+D_y^2 v)=0,\quad y\in\R^d.
 $$
 A compactness criterion for families  of almost periodic functions  is then used to guarantee the uniform convergence of $\d$ times the solutions $v_\d$ of the $\d$-approximate corrector equations, and this limit, which is proven to be unique and constant, depending only on $M$, is the  effective nonlinearity $\bar F(M)$ for the almost periodic homogenization problem.  

In~\cite{CSW}, Caffarelli, Souganidis and Wang solved the stochastic homogenization problem for \eqref{e1.1}, in the most general form where $F$ may also depend explicitly on $x,u, Du$,   and, for each $M$, $F(\om,y,M)$ is a {\em stationary ergodic process}. The latter means that 
\begin{equation}\label{e1.sto}
F(\om,y,M)=\tilde F(T(y)\om,M), 
\end{equation}
with $\tilde F(\cdot,M):=F(\cdot,0,M)$, for $T(y)\,:\, \Om\to\Om$, $y\in\R^d$, satisfying: 
\begin{enumerate}
\item[(T1)]{(\textsc{Group property})}  $T(0)=\Id$, $T(y+z)=T(y)\circ T(z)$, where $\Id\,:\, \Om\to\Om$ is the identity mapping;  
\item[(T2)]{(\textsc{Measurability})} The mapping $T\,:\,\R^d\X\Om\to\Om$, defined by $T(y,\om)=T(y)\om$, is measurable in the product $\s$-algebra $\BB^d\X \FF$, where $\BB^d$ denotes the Borel $\s$-algebra in $\R^d$; 
\item[(T3)]{(\textsc{Invariance of $\mu$})} $\mu(T(y) A)= \mu(A)$, for all $A\in\FF$;
\item[(T4)]{(\textsc{Ergodicity})} If $A\in\FF$ satisfies $T(y)A=A $, for all $y\in\R^d$, then either $\mu(A)=0$ or $\mu(A)=1$.  
\end{enumerate}

In the recent paper \cite{AS}, Armstrong and Smart simplify the proof in \cite{CSW} and improve the treatment of the case in which $F$ depends also on the gradient $Du$.

The main purpose of this Note is to give a very simple and direct proof of the result in \cite{CSW} under the {\em additional assumption} that almost all realization $F(\om,\cdot,M)$  is a weakly* almost periodic function, that is,
\begin{equation}\label{e1.F}
F(\om,\cdot,M) \in \APs(\R^d), \quad \text{for all $M\in\S^d$, $\mu$-a.a.\ $\om\in\Om$},
\end{equation}
where we denote by $\APs(\R^d)$  the space of so called weakly* almost periodic functions, defined as the algebraic sum 
$$
\APs(\R^d):=\AP(\R^d)+\NN(\R^d),
$$ 
where $\AP(\R^d)$ denotes the space of almost periodic functions and $\NN(\R^d)$ is the subspace of the bounded uniformly continuous functions $\BUC(\R^d)$, defined by  
\begin{equation}\label{eN}
\NN:= \{f\in \BUC(\R^d)\, :\, \lim_{R\to\infty}\frac{1}{|B(0;R)|}\int_{B(0;R)}|f(y)|\,dy=0\},
\end{equation}
where, as usual, $B(0;R)$ denotes the ball centered at the origin with radius $R$, and $|B|$ denotes the $d$-dimensional Lebesgue measure of $B$. 
We believe that this special case serves, on one hand, as a good verification of the general result of \cite{CSW}, in which it is possible a much simpler definition for the effective Hamiltonian. On the other hand, it also serves as a good instance for the application of results in the abstract theory of ergodic algebras.

  In this Note, we give a simple direct proof of the following result, which is a particular case of the general result in \cite{CSW}. We still need one more condition on $F$, which is also required in \cite{CSW} and \cite{AS} and is needed in connection with the basic results in the theory of viscosity solutions of fully nonlinear elliptic equations, see \cite{CIL}. So, as in \cite{AS},  we assume that there exists a   function $\rho:[0,\infty)\to [0,\infty)$ satisfying 
  $\lim_{s\to0+}\rho(s)=0$, and $\g>\frac12$, such that,  for all $(\om,M)\in\Om\X\S^d$, $y,z\in\R^d$,  
 \begin{equation}\label{eF3}
|F(\om,y,M)-F(\om,z,M)|\le \rho((1+|M|)|y-z|^\g).
 \end{equation}
  
 \begin{theorem}[{\em cf.} \cite{CSW}] \label{T:1.1}  Let $F(\s,y,M)$ satisfy \eqref{e1.3}, \eqref{e1.sto}, \eqref{e1.F}, \eqref{eF3},  with $T(y)$ satisfying (T1)--(T4). Then, there exists $\bar F:\S^d\to\R$ satisfying the uniform ellipticity condition
 \begin{equation}\label{e1.T1}
 \l\|N\|\le \bar F(M+N)-\bar F(M)\le \Lambda \|N\|,
 \end{equation}
  with $\l,\Lambda$ as in \eqref{e1.3}, such that, for any bounded open set $U$, with Lipschitz boundary $\po U$, and $g\in C(\po U)$, $\mu$-almost surely,  the solutions  of \eqref{e1.1}-\eqref{e1.2},  $u^\ve(\om,\cdot)$,   converge uniformly to the unique viscosity solution $u$ of
  \begin{align}
\bar F(D^2 u)=0, &\quad x\in U \label{e1.1h}\\
 u(x)=g(x),&\quad x\in\po U \label{e1.2h}
\end{align}
  
  \end{theorem}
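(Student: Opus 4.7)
My plan is to adapt the perturbed test function method of Evans \cite{Ev} to this stochastic setting, using the weakly* almost periodic assumption \eqref{e1.F} to furnish a simple construction of the effective nonlinearity. Compactness of the family $\{u^\ve(\om,\cdot)\}_{\ve>0}$ follows from Caffarelli's interior $C^\a$ estimate \cite{Ca,CC}, which applies uniformly in $\ve$ since $F(\om,x/\ve,\cdot)$ has the same ellipticity constants $\l,\Lambda$, combined with standard barriers at $\po U$ using the Lipschitz regularity of the boundary and continuity of $g$; Arzel\`a--Ascoli then produces a subsequence $u^{\ve_k}\to u$ uniformly on $\bar U$ with $u|_{\po U}=g$, and it suffices to identify $u$ as the viscosity solution of \eqref{e1.1h}--\eqref{e1.2h}. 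For each fixed $M\in\S^d$ and $\d>0$, I would solve by Perron's method the $\d$-approximate corrector equation
\begin{equation*}
\d v_\d + F(\om,y,M+D_y^2 v_\d) = 0,\quad y\in\R^d,
\end{equation*}
obtaining a unique bounded viscosity solution with $\|\d v_\d(\om,\cdot)\|_\infty\le\|F(\om,\cdot,M)\|_\infty$. The stationarity \eqref{e1.sto} and uniqueness force $v_\d(\om,y)=v_\d(T(y)\om,0)$, so that $w_\d(\om):=\d v_\d(\om,0)$ is a bounded stationary field on $\Om$, and I would define $\bar F(M):=-\lim_{\d\to 0^+}w_\d(\om)$.

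The heart of the argument is to show that this limit exists $\mu$-a.s., is $\om$-independent, and that moreover $\d v_\d(\om,y)+\bar F(M)\to 0$ uniformly in $y$ as $\d\to 0$. Here the $\APs$ hypothesis plays a double role: Birkhoff's ergodic theorem applied to the stationary field $w_\d$ identifies the spatial averages $|B(0;R)|^{-1}\int_{B(0;R)}\d v_\d(\om,y)\,dy$ with a deterministic mean as $R\to\infty$; on the other hand, since $F(\om,\cdot,M)\in\APs(\R^d)$ splits as $F=G+N$ with $G\in\AP(\R^d)$ and $N\in\NN(\R^d)$, one can run Ishii's almost periodic corrector argument \cite{I} on the AP component $G$, where Bochner's compactness criterion provides precompactness of the translates of $G$, and absorb the $\NN$-contribution via \eqref{eN} and a viscosity stability estimate based on \eqref{e1.3} and \eqref{eF3}. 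Uniform ellipticity of $\bar F$ in the form \eqref{e1.T1} with the same constants $\l,\Lambda$ passes from \eqref{e1.3} by comparing correctors for $M$ and $M+N$, $N\ge 0$. With $\bar F$ in hand, Evans' perturbed test function method concludes the proof: for $\phi\in C^2$ touching $u$ from above at $x_0\in U$, set $M_0:=D^2\phi(x_0)$ and
\begin{equation*}
\phi^\ve(x):=\phi(x)+\ve^2 v_\d(\om,x/\ve),
\end{equation*}
for a suitable $\d=\d(\ve)\to 0$; the continuity modulus \eqref{eF3} together with the uniform convergence $\d v_\d\to -\bar F(M_0)$ make $\phi^\ve$ an approximate supersolution of \eqref{e1.1} near $x_0$, and comparing it with $u^\ve$ at the maximum of $u^\ve-\phi^\ve$ and passing to the limit yields $\bar F(M_0)\ge 0$. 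The symmetric argument gives the reverse inequality, and the standard comparison principle for \eqref{e1.1h} (available from \eqref{e1.T1}, see \cite{CIL}) identifies $u$ uniquely, so the full family $u^\ve$ converges.

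The main obstacle is the middle step: establishing the uniform deterministic limit $\d v_\d\to -\bar F(M)$ under only the weakly* almost periodic structure of $F$. When the $\NN$-component of $F$ is nontrivial, the orbit $\{v_\d(\om,\cdot+h)\}_{h\in\R^d}$ need not be precompact in the supremum norm, so Bochner's criterion does not apply directly to $v_\d$ and one cannot conclude by pure translation compactness as in \cite{I}. The argument must instead transfer the AP structure from $F$ to $v_\d$ via comparison with the corrector for the AP part $G$, exploiting the vanishing spatial mean \eqref{eN} of $N$ to make the $y$-uniform error arbitrarily small jointly with $\d$; matching this spatial control with the stationary structure via Birkhoff's theorem is what pins down $\bar F(M)$ as a deterministic constant.
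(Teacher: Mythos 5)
Your overall framework (Caffarelli's $C^\alpha$ estimates for compactness, Ishii's $\delta$-approximate correctors, the perturbed test function method, ergodicity of $T(y)$ to remove the $\om$-dependence of $\bar F$) matches the paper's, and you correctly flag the middle step as the crux. But your proposed resolution of that step does not work. You propose to split $F=G+N$ with $G\in\AP(\R^d)$, $N\in\NN(\R^d)$, run Ishii's argument on $G$, and ``absorb the $\NN$-contribution via \eqref{eN} and a viscosity stability estimate.'' The difficulty is that replacing $F$ by $G$ in the $\delta$-corrector equation produces an error in $\delta v_\delta$ controlled, via the comparison principle, by $\|N(\om,\cdot,M)\|_{L^\infty(\R^d)}$, and $\NN$-functions need not be small in $L^\infty$; the defining property \eqref{eN} says only that the $L^1$ average of $|N|$ over large balls vanishes. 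So the corrector for $F$ is not uniformly close to the corrector for $G$, and ``exploiting the vanishing spatial mean of $N$'' cannot by itself make the $y$-uniform error small. As stated, the absorption step has no mechanism behind it, which is precisely the obstacle you acknowledge without resolving.

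The missing idea is that, under stationarity and ergodicity, the $\NN$-component of $F(\om,\cdot,M)$ is actually \emph{identically zero} for $\mu$-a.a.\ $\om$, so no absorption is needed: the stochastic $\APs$ problem reduces outright to the deterministic $\AP$ problem. After first compactifying $\Om$ (Step~1) and constructing a measurable, stationary almost periodic component $F_*(\om,y,M)=\tilde F_*(T(y)\om,M)$ (Step~2, using an $\AP$ approximate identity and \eqref{eF3} for a diagonal argument), the paper's Step~3 applies the invariance of $\mu$ together with Birkhoff's ergodic theorem to write, for a.a.\ $\om_*$,
\begin{equation*}
\int_{\Om}\bigl|\tilde F(\om,M)-\tilde F_*(\om,M)\bigr|\,d\mu(\om)
=\oM\bigl(|F(\om_*,\cdot,M)-F_*(\om_*,\cdot,M)|\bigr)=0,
\end{equation*}
the last equality being exactly \eqref{eN} for the $\NN$-component. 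Hence $\tilde F=\tilde F_*$ $\mu$-a.e., and by stationarity and continuity $F(\om,y,M)=F_*(\om,y,M)$ for all $y$, $\mu$-a.s. From there one applies Ishii's almost periodic argument directly to $F=F_*$ (your step would then go through verbatim), and ergodicity removes the $\om$-dependence of $\bar F$ via the translation identity $v_\delta(T(y_0)\om,y,M)=v_\delta(\om,y+y_0,M)$. Your proposal also skips the compactification and measurability groundwork of Steps~1--2, which is what makes the Birkhoff computation above legitimate; without it, you cannot even speak of $\tilde F_*$ as an $\FF$-measurable object.
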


  We remark that the only fact used to prove Theorem~\ref{T:1.1}, related specifically with problem, \eqref{e1.1}-\eqref{e1.2}, is  that the corresponding deterministic almost periodic homogenization problem has been solved, besides an usual hypothesis of equicontinuity on the microscopic variable,  such as \eqref{eF3}, in general also needed for solving the equation itself and for finding the effective nonlinearity. So, in particular, instead of \eqref{e1.1} we could have a much more general fully nonlinear uniformly elliptic or parabolic equation, or Hamilton-Jacobi, etc. In sum, the point we make here is a very general one. It basically means that adding the condition that almost surely the realizations belong to $\APs(\R^d)$, reduces any stochastic homogenization problem, under an usual equicontinuity assumption on the microscopic variable, to the corresponding deterministic almost periodic homogenization problem.

  The proof of Theorem~\ref{T:1.1} is given in the following Section~\ref{S:2}.   We also provide in the Appendix~\ref{A:1} a brief exposition of the subject of ergodic algebras, including the statement of Theorem~\ref{T:1.5},  asserted in \cite{JKO}  with detailed proof in \cite{AFS}, which establishes that almost all realizations of continuous functions by a stationary process belong to an ergodic algebra. This point is important  since $\APs(\R^d)$ is the greatest so far known ergodic algebra in $\R^d$. In Appendix~\ref{A:2},  we provided a detailed discussion about  the weakly almost periodic functions, introduced by Eberlein in \cite{E1}.

\section{Proof of Theorem~\ref{T:1.1}.}\label{S:2}

{\em Step 1}.  We may assume that $\FF$ is the $\s$-algebra generated by the countable family of sets $\{\om\in\Om\,:\, a<F(\om,y,M)<b\,\}$, with $a,b\in\Q$, $y\in\Q^d$, $M\in\S^q\cap\Q^{d^2}$. Let us define $\U_\FF$ as the closed algebra with unity  generated by the functions  $\tilde F(T(y)\,\cdot ,M)$, for $y\in\Q^d$ and $M\in\S^d\cap\Q^{d^2}$. We may assume that the algebra  $\U_\FF$  distinguishes between the points of $\Om$, that is, given any two distinct points 
$\om_1,\om_2\in\Om$, there exists $g\in\U_\FF$ such that $g(\om_1)\ne g(\om_2)$; if this is not true we may replace $\Om$ by its quotient by a trivial equivalence relation, in a standard way, and we proceed correspondingly with $\FF$ and $\mu$.  Thus, using a classical theorem of Stone (see, e.g., \cite{DS}, theorem 20, p.276), we may embed $\Om$ densely in a compact (separable) topological space $\tilde \Om$ so that $\U_\FF$ is isometrically identified to $C(\tilde \Om)$.  In this way, $\FF$ extends to the collection, $\tilde \FF$, of Borel sets of $\tilde \Om$ and $\mu$ extends to a probability measure $\tilde\mu$ on $\tilde \Om$ by setting $\tilde \mu(A)=\mu(A\cap \Om)$, for all $A\in\tilde\FF$. It is also a standard matter (see, e.g., \cite{AFS}) to check that the mappings $T(y):\Om\to\Om$ extend to a group of homeomorphisms $\tilde T(y): \tilde\Om\to \tilde\Om$, satisfying (T1)--(T4) with $\Om$ replaced by $\tilde\Om$, but now, using also \eqref{eF3}, we have that the mapping $\tilde T:\R^d\X \tilde\Om\to \tilde\Om$, $\tilde T(y,\tilde \om)=\tilde T(y)\tilde \om$
is continuous.  Indeed, given any $g\in\U_\FF$, $g$ is identified as a representative of $C(\tilde\Om)$, and, if $(\om_k,y_k)\to(\om,y)$, 
\begin{multline*}
|g(T(y_k)\om_k)-g(T(y)\om)|\le |g(T(y_k)\om_k)-g(T(y)\om_k)|+|g(T(y)\om_k)-g(T(y)\om)|\\
\le O(|y_k-y|)+|g(T(y)\om_k)-g(T(y)\om)|\to 0,\quad\text{as $k\to\infty$},
\end{multline*}
where we used \eqref{eF3} and the continuity of $g(T(y)\,\cdot)$, for each fixed $y\in\R^d$.

Moreover, clearly, $\tilde \mu$ is an invariant measure for $\tilde T(y)$ , $y\in\R^d$. The family $\{(\tilde T(y),\tilde\Om,\tilde \mu)\,:\, y\in\R^d\}$ constitutes a so called continuous dynamical system, which is ergodic by (T4). In what follows in this proof, we will no longer use the ``$\tilde{\phantom{h}}$'' to distinguish the extensions from the original objects, that is, we assume that $\{(T(y), \Om, \mu)\,:\,y\in\R^d\}$ is a  continuous ergodic dynamical system on the separable compact space $\Om$. 

{\em Step 2}.    We claim that there exists $F_*:\Om\X\R^d\X\S^d\to\R$, such that, for each $(y,M)\in\R^d\X\S^d$, $F_*(\cdot,y,M)$ is $\FF$-measurable, and for, $\mu$-a.a.\ $\om\in\Om$ and all $M\in\S^d$, $F_*(\om,\cdot,M)$ is the almost periodic component of $F(\om, \cdot, M)$, that is,
\begin{equation}\label{e2.1}
F_*(\om,\cdot,M)\in\AP(\R^d),\quad  \lim_{R\to\infty}\frac{1}{|B(0;R)|}\int_{B(0;R)}|F(\om,y,M)-F_*(\om,y,M)|\,dy=0.
\end{equation}
Moreover, if $\tilde F_*(\om,M):=F_*(\om,0,M)$, for all $(\om,M)\in\Om\X\S^d$, then $F_*(\om,y,M)=\tilde F_*(T(y)\om,M)$, with $T$ given by \eqref{e1.sto} satisfying (T1)--(T4). 

Indeed, by assumption $F(\om,\cdot,M)\in \APs(\R^n)$, for $\mu$-a.a.\ $\om\in\Om$ and all $M\in\S^d$, and by definition of $\APs(\R^d)$, we have
\begin{equation}\label{e2.2}
F(\om,\cdot,M)=F_*(\om,\cdot,M)+ F_\NN(\om,\cdot,M),
\end{equation}
for certain $F_*(\om,\cdot,M)\in\AP(\R^d)$ and $F_\NN(\om,\cdot,M)\in\NN(\R^d)$.  Let us assume that \eqref{e2.2} holds for $\om\in\Om_*$, with $\mu(\Om_*)=1$. Since we may find
compact sets $K_n\subset\Om_*$, with $\mu(K_n)\ge 1-\frac1{n}$, for all $n\in\N$, we may assume that \eqref{e2.2} holds everywhere in the compact space $\Om$.  

Now, by the properties of $\AP(\R^d)$ (see, e.g., \cite{DS}) it is well known that there exists in 
$\AP(\R^d)$ a generalized sequence of approximations of the unity $\{\phi_\a\}_{\a\in D}$, for some directed set $D$, such that, for all $\varphi\in\AP(\R^d)$,  
$$
\phi_\a{*}_{{}_\oM}\varphi\underset{\a\in D}{\to} \varphi,\quad\text{where}\quad \phi_\a{*}_{{}_\oM}\varphi(y):=\oM(\phi_\a(\cdot)\varphi(y-\cdot))=\lim_{R\to\infty}\frac{1}{|B(0;R)|}\int_{B(0;R)}\phi_\a(z)\varphi(y-z)\,dz,
$$  
where the convergence is in the $\sup$-norm in $\R^d$, and we also assume, as we may, that $\phi_\a(-y)=\phi_\a(y)$, for all $y\in\R^d$, for all $\a\in D$.  

Since, clearly, $\phi_\a *_{{}_\oM}F_\NN(\om,\cdot,M)\equiv0$, for all $\om\in\Om$, we have that
$$
\phi_\a*_{{}_\oM}F(\om,\cdot,M)\underset{\a\in D}{\to}F_*(\om,\cdot,M),\quad\text{for all  $\om\in\Om$ and all $M\in\S^d$}.
$$
We remark that the above convergence is uniform and we may replace $\phi_\a$ by a  sequence $\phi_n$. We also remark that \eqref{eF3} allows to choose the sequence  $\phi_n$ 
independently of $\om\in\Om$, or $M\in\S^d$, which can be seen as follows.

Indeed, since $\Om$ is separable, we may endow it  with a metric. Using \eqref{eF3} and the compactness of the metric space $\Om$,
we choose a sequence $\phi_n$ such that $\phi_n*_{{}_\oM}F(\om,\cdot,M)$  converges locally uniformly in $\Om\X\R^d$, and  $\phi_n*_{{}_\oM}F(\om,\cdot,M)=\phi_n*_{{}_\oM}F_*(\om,\cdot,M)$, for all $\om\in\Om$.  In fact, for each $N\in\N$, using \eqref{eF3}, we may define a subsequence of  $\phi_\a*_{{}_\oM}F(\om,\cdot,M)$ converging, uniformly for $|y|\le N$ and $\om$ in a countable dense subset  $D\subset \Om$, to a function $G_N(\cdot,\cdot,M)$, defined in $D\X\{|y|\le N\}$. Now the uniform continuity of $F(\cdot,\cdot,M)$ on
$\Om\X\{|y|\le N\}$, implies the uniform continuity of the limit $G_N(\cdot,\cdot,M)$ in $D\X\{|y|\le N\}$ and so it can be extended continuously in a unique way to $\Om\X\{|y|\le N\}$.
We then prove, in a standard fashion, that the sequence so defined converges to $G_N(\cdot,\cdot,M)$ uniformly in $\Om\X\{|y|\le N\}$. By taking successive subsequences for $N=1,2,\cdots$, by a diagonal process we may then define the locally uniformly convergent sequence    $\phi_n*_{{}_\oM}F(\om,\cdot,M)$, to some continuous function $G(\cdot,\cdot,M)$, in $\Om\X\R^d$.

Now, for each $\om\in\Om$, $\phi_n*_{{}_\oM}F_*(\om,\cdot,M)$ is compact in the $\sup$-norm.  Indeed, each $\phi_n*_{{}_\oM}F_*(\om,\cdot,M)$  belongs to $\overline{\co(O(F_*(\om,\cdot,M))}$, where $O(F_*(\om,\cdot,M)):=\{ F_*(\om,\cdot+\l,M)\,:\, \l\in\R^d\}$, which is pre-compact in $\AP(\R^d)$ by Bochner's criterion, and  $\overline{\co(O(F_*(\om,\cdot,M))}$ denotes the closure of the convex hull of  $O(F_*(\om,\cdot,M))$,  which is then also compact in the $\sup$-norm, by a well known result (see, e.g.,  \cite{Ru0}, p.72).
Thus, we may extract a subsequence converging uniformly to an almost periodic function,
which then must coincide with $F_*(\om,\cdot,M)$. Hence, we conclude that $\phi_n*_{{}_\oM}F(\cdot,\cdot,M)$ converges locally uniformly to $F_*(\cdot,\cdot,M)$.

The $\FF$-measurability of $F_*(\om,y,M)$ is then clear in view of these observations. It is also immediate to check that $F_*(\om,y,M)=\tilde F_*(T(y)\om,M)$,  with $T$ given by \eqref{e1.sto} satisfying (T1)--(T4).

{\em Step 3}. We claim that  
\begin{equation}\label{e2.3}
F(\om,y,M)=F_*(\om,y,M),\qquad \text{ for $\mu$-a.a.\ $\om\in\Om$, for all $(y,M)\in\R^d\X\S^d$}.
\end{equation}
 Indeed, for each $(y,M)\in \R^d\X\S^d$, we have
 \begin{equation}\label{e2.4}
 \begin{aligned}
 &\int_{\Om}|\tilde F(T(y)\om,M)-\tilde F_*(T(y)\om,M)|\,d\mu(\om)=\int_{\Om}|\tilde F(\om, M)-\tilde F_*(\om,M)|\,d\mu(\om)\\
 &\qquad\qquad=\oM\left(|\tilde F(T(\cdot)\om_*,M)-\tilde F_*(T(\cdot)\om_*,M)|\right)=0,\quad \text{for $\mu$-a.a.\ $\om_*\in\Om$},
 \end{aligned}
 \end{equation}
 by the invariance of $\mu$ with respect to $T(y)$, Birkhoff's relation and \eqref{e1.F}. Thus, \eqref{e2.4} implies that we may find a subset $\Om_*\subset\Om$, with $\mu(\Om_*)=1$, such that 
 $$
 \tilde F(T(y)\om, M)=\tilde F_*(T(y)\om,M), \quad\text{for all $\om\in\Om_*$,  $y\in\Q^d$, $M\in\S^d\cap\Q^{d^2}$},  
 $$
 which, by density and continuity, gives \eqref{e2.3}.

{\em Step 4} (Homogenization). By \eqref{e2.3}, it is the enough to consider the stochastic almost periodic homogenization problem, that is, in \eqref{e1.1}-\eqref{e1.2}, it suffices to consider the case where, instead of \eqref{e1.F}, we have   
\begin{equation}\label{e1.F*}
F(\om,\cdot,M) \in \AP(\R^d), \quad \text{for all $M\in\S$, $\mu$-a.a.\ $\om\in\Om$},
\end{equation}
$\Om$ is a separable compact space  and $\{(T(y),\Om,\mu)\,:\, y\in\R^d\}$ is a continuous ergodic $d$-dimensional dynamical system. 

{\em \#4.1}.  So, we fix $\om\in\Om$ verifying \eqref{e1.F*},  and solve the homogenization problem for   \eqref{e1.1}-\eqref{e1.2} using the blueprint described in \cite{CSW} and detailed in \cite{AF}, which builds upon the ideas from Ishii~\cite{I}. Here, the existence and uniqueness of viscosity solutions to \eqref{e1.1}-\eqref{e1.2} follow from the general theory presented in \cite{CIL} and the references therein. 

The compactness of the sequence of solutions $u^\ve$ of \eqref{e1.1}-\eqref{e1.2} follows from the uniform Holder continuity estimate of Caffarelli~\cite{Ca} (see also \cite{CC}).   The approximate 
$\d$-corrector equation is then introduced, as mentioned in the Introduction, 
 \begin{equation}\label{e2.5}
\d v+  F(\om,y,M+D_y^2 v)=0,\quad y\in\R^d.
 \end{equation}
 The compactness of $\d v_\d$, where $v_\d$ is the solution of \eqref{e2.5}, follows from an application of a classical general criterion for the pre-compactness of a family of almost periodic functions (see, e.g., \cite{AF}). The effective operator $\bar F(\om,M)$ is then defined as the limit $\d v_\d$, in the $\sup$-norm, which is easily seen to be constant, depending on $M$ and, in principle, also on $\om$.  A routine procedure shows that $\bar F(\om,M)$ satisfies the uniform ellipticity condition \eqref{e1.3}, with the same constants $\l,\Lambda$.
 It is then shown that any uniform limit of a subsequence of the solutions $u^\ve$, of \eqref{e1.1}-\eqref{e1.1}, is the unique viscosity solution $u$ of 
 \begin{align}
 &\bar F(\om, D^2 u)=0, \quad x\in U, \label{e2.6}\\
 & u|\po U= g. \label{e2.7}
 \end{align}

{\em \#4.2}. It remains to prove that $\bar F(\om, M)$ does not depend on $\om$. This follows from the ergodicity of $T(y)$, $y\in\R^d$, which so far has not been used.   So, we claim that $\bar F(\om,M)$ is an invariant function, for each $M\in\S^d$.  

Indeed, if $\om\in\Om$ verifies \eqref{e1.F*}, then let us take an arbitrary $y_0\in\R^d$. We prove that $\bar F(T(y_0)\om,M)=\bar F(\om,M)$, for all $M\in\S^d$.  In fact, for any $\d>0$,
if $v_\d(\om, y, M)$ is the solution of \eqref{e2.5} and $v_\d(T(y_0)\om,y,M)$ is the solution of the same equation with $\om$ replaced by $T(y_0)\om$,  using \eqref{e1.sto}, it is easy to see that 
$$
v_\d(T(y_0)\om, y, M)=v_\d(\om, y+y_0,M),\quad \text{for all $y\in\R^d$}.
$$
Since $\bar F(T(y_0)\om,M)$ is the uniform limit in $\R^d$, as $\d\to0$,  of $\d v_\d(T(y_0)\om, \cdot,M)$, we conclude that $\bar F(T(y_0)\om,M)=\bar F(\om,M)$, for all $\om$ verifying \eqref{e1.F*}.
Now, by the ergodicity of $T(y)$, we conclude that $\bar F(\om,M)$ does not depend on $\om$, and so $\bar F(\om,M)=\bar F(M)$, which implies that for a.a.\ $\om\in\Om$ the solutions
$u^\ve(\om,x)$ converge as $\ve\to0$, uniformly to the unique viscosity solution of   
\begin{align}
 &\bar F(D^2 u)=0, \quad x\in U, \label{e2.8}\\
 & u|\po U= g, \label{e2.9}
 \end{align}
which finishes the proof of Theorem~\ref{T:1.1}.

\appendix
\section{Algebras with mean value.}\label{A:1}

In this section we recall the basic facts concerning algebras with mean values and, in particular, ergodic algebras. To begin with, we recall
the notion of mean value for functions defined in $\re^d$.

\begin{definition}\label{D:3} Let $g\in L_\loc^1(\R^d)$. A number $\oM(g)$ is called the {\em mean value of $g$} if
\begin{equation}\label{eA.2}
\lim_{\ve \to0} \int_Ag(\ve^{-1}x)\,dx=|A|\oM(g)
\end{equation}
for any Lebesgue measurable bounded set $A\subset\R^d$, where $|A|$ stands for the Lebesgue measure of $A$.
This is equivalent to say that $g(\ve^{-1}x)$ converges, in the
duality with $L^\infty$ and compactly supported functions, to the constant $\mv{g}$.
Also,
if $A_t:=\{x\in\R^n\,:\, t^{-1}x\in A\}$ for $t>0$ and $|A|\ne0$, \eqref{e1.2} may be written as
\begin{equation}\label{eA.3}
\lim_{t\to\infty}\frac1{t^d|A|}\int_{A_t}g(x)\,dx=\oM(g).
\end{equation}
\end{definition}
Also, we will use the notation $\medint_{A}g\,dx$ for the average or mean value of $g$ on the measurable set $A$, and $\medint_{\R^d}g\,dx$ or  $\overline{g}$ for $\oM(g)$, given by \eqref{e1.3}.

We recall now the definition of algebras with mean value introduced in \cite{ZK}. As usual, we denote by $\BUC(\R^d)$ the
space of the bounded uniformly continuous real-valued functions in $\R^d$.

\begin{definition}\label{D:5} Let $\AA$ be a linear subspace of $\BUC(\R^d)$.
We say that $\AA$ is an {\em algebra with mean value} (or {\em
algebra w.m.v.}, in short), if the following conditions are
satisfied:
\begin{enumerate}
\item[(A)] If $f$ and $g$ belong to $\AA$, then the product $fg$ belongs to $\AA$.
\item[(B)] $\AA$ is invariant with respect to translations $\tau_y$ in $\R^d$.
\item[(C)] Any $f\in\AA$ possesses a mean value.
\item[(D)] $\AA$ is closed in $\BUC(\R^d)$ and contains the unity, i.e., the function $e(x):=1$ for $x\in\R^d$.
\end{enumerate}
\end{definition}


For the development of the homogenization theory in algebras
with mean value, as is done in \cite{ZK,JKO} (see also \cite{CG}),
in similarity with the case of almost periodic functions, one
introduces, for $1\leq p<\infty$, the space  $\BB^p$ as the abstract
completion of the algebra
$\AA$ with respect to the Besicovitch seminorm
$$
|f|_p^p:=\limsup_{L\to\infty}\frac{1}{(2L)^d}\int_{[-L,L]^d}|f|^p\,dx.
$$
Both the action of translations and the mean value
extend by continuity to $\BB^p$, and we will keep using the notation
$f(\cdot+y)$ and $\oM(f)$ even when $f\in\BB^p$ and $y\in\R^d$. Furthermore,
for $p>1$ the product in the algebra extends to a bilinear operator from $\BB^p\times\BB^q$ into $\BB^1$,
with $q$ equal to the dual exponent of $p$, satisfying
$$
|fg|_1\leq |f|_p|g|_q.
$$
In particular, the operator $\oM(fg)$ provides a nonnegative definite
bilinear form on $\BB^2$.

Obviously the corresponding quotient spaces for all these spaces
(with respect to the null space of the seminorms) are Banach spaces,
and we get a Hilbert space in the case $p=2$. We denote by
$\overset{\BB^p}{=}$, the equivalence relation given by the equality
in the sense of the $\BB^p$ semi-norm.

\begin{remark}\label{R:0.1} A classical argument going back to Besicovitch~\cite{B} (see also \cite{JKO}, p.239) shows that the elements of $\BB^p$ can be represented
by functions in $L_{\loc}^p(\R^d)$, $1\le p<\infty$.
\end{remark}

We next recall a result established in \cite{AFS} which provides a connection between algebras with mean value and compactifications  of $\R^d$ endowed with a group of ``translations'' and an invariant probability measure.

\begin{theorem}[cf.\ \cite{AFS}]\label{T1}
For an algebra w.m.v.\ $\AA$, we have:
\begin{enumerate} 
\item[(i)] There exist a compact space
${\mathcal K}$ and an isometric isomorphism $i$ identifying $\AA$ with the
algebra $C({\mathcal K})$ of continuous functions on ${\mathcal K}$. By abuse of notation we will make the identification $i(f)\equiv f$, for all $f\in\AA$. When $\AA$ distinguishes between points, there is a canonical embedding $\R^d\subset \KK$, with dense image. 
\medskip

\item[(ii)] The translations $T(y):\R^d\to\R^d$, $T(y)x=x+y$,
extend to a group of homeomorphisms $T(y):{\mathcal K}\to{\mathcal K}$, $y\in\R^d$, i.e., $T(0)=\Id$, $T(y_1+y_2)=T(y_1)\circ T(y_2)$. The map $T:\R^n \X\KK\to\KK$, given by $T(y,z):=T(y)z$ is continuous. In other words, $T(y)$, $y\in\R^d$, is a continuous ($d$-dimensional) dynamical system over $\KK$. 
\medskip

\item[(iii)] There exists a Radon probability measure ${\mathfrak m}$ on
${\mathcal K}$ which is invariant by the
group of transformations $T(y)$, $y\in\R^d$, such that
$$
\Medint_{\R^d}f\,dx=\int_{\mathcal K} f\,d\mathfrak m.
$$
\medskip
\item[(iv)]For $1\le p\le \infty$, the
Besicovitch space $\BB^p\big/\overset{\BB^p}{=}$ is
isometrically isomorphic to $L^p({\mathcal K}, {\mathfrak m})$.
\end{enumerate}
Actually, {\rm(i)} and {\rm(ii)}  hold independently of the mean value property {\rm(C)} in the definition of algebra w.m.v. 
\end{theorem}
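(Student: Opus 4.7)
The plan is to deduce parts (i)--(iv) by turning classical Gelfand--Stone and Riesz--Markov machinery into the specific assertions here. For (i), I regard $\AA$ as a closed real commutative Banach subalgebra of $\BUC(\R^d)$ containing the unit $e\equiv 1$, and take $\KK$ to be its character space, i.e., the set of nonzero multiplicative linear functionals $\chi:\AA\to\R$ equipped with the weak-$*$ topology inherited from $\AA^*$. Then $\KK$ is compact Hausdorff and the Gelfand transform $i(f)(\chi):=\chi(f)$ is an isometric algebra isomorphism of $\AA$ onto $C(\KK)$; this is the Stone theorem cited in the text. The embedding $\R^d\hookrightarrow \KK$ sends $x$ to the evaluation character $\hat x(f):=f(x)$, which is injective exactly when $\AA$ separates points. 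Density of $\hat{\R^d}$ in $\KK$ follows because the isometry identifies $\|f\|_{\BUC}=\sup_{x\in\R^d}|f(x)|$ with $\|i(f)\|_{C(\KK)}=\sup_{\chi\in\KK}|\chi(f)|$; if $\hat{\R^d}$ were not dense, Urysohn on $\KK$ would produce $F\in C(\KK)$ vanishing on $\overline{\hat{\R^d}}$ with $\|F\|=1$, a contradiction.

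For (ii), the translations $\tau_y f(\cdot):=f(\cdot+y)$ are isometric algebra automorphisms of $\AA$ by condition (B), and their adjoints map characters to characters, giving $T(y):\KK\to\KK$ characterized by $f(T(y)\chi)=(\tau_y f)(\chi)$ for all $f\in\AA$. The group relations $T(0)=\Id$ and $T(y_1+y_2)=T(y_1)\circ T(y_2)$ are inherited from the analogous identities for $\tau_y$, and each $T(y)$ is a homeomorphism because it admits the continuous inverse $T(-y)$. The crucial technical point is joint continuity of $T:\R^d\times\KK\to\KK$. Since $\KK$ carries the initial topology from $C(\KK)\equiv\AA$, it suffices to check that $f\circ T$ is continuous for every $f\in\AA$. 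Given $(y_n,\chi_n)\to(y,\chi)$, I would split
\[
|f(T(y_n)\chi_n)-f(T(y)\chi)|\le \|\tau_{y_n}f-\tau_y f\|_\infty+|(\tau_y f)(\chi_n)-(\tau_y f)(\chi)|,
\]
whose first term vanishes by uniform continuity of $f\in\BUC(\R^d)$ and whose second by continuity of $\tau_y f\in C(\KK)$ at $\chi$.

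For (iii), the mean-value functional $\oM:\AA\to\R$ is linear, normalized by $\oM(e)=1$, and nonnegative on nonnegative functions directly from the averaging formula \eqref{eA.3}; transported through $i$ it becomes a positive normalized linear functional on $C(\KK)$, so Riesz--Markov yields a unique Radon probability measure $\mathfrak{m}$ on $\KK$ with $\oM(f)=\int_\KK f\,d\mathfrak{m}$. Invariance of $\mathfrak{m}$ under $T(y)$ reduces to $\oM(\tau_y f)=\oM(f)$, which follows from \eqref{eA.3} because $B(0,R)$ and $B(y,R)$ differ in Lebesgue measure by $O(R^{d-1})$ while the normalization is $R^{-d}$.

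For (iv), I first note that for $f\in\AA$ and $p\ge 1$ one has $|f|^p\in\AA$, because $t\mapsto |t|^p$ is the uniform limit on the compact interval $[-\|f\|_\infty,\|f\|_\infty]$ of polynomials in $t$ by Stone--Weierstrass, and $\AA$ is closed under polynomials in $f$ and under sup-norm limits. Hence
\[
|f|_p^p=\oM(|f|^p)=\int_\KK |i(f)|^p\,d\mathfrak{m}=\|i(f)\|_{L^p(\mathfrak{m})}^p,
\]
so $i$ is an isometry from $(\AA,|\cdot|_p)$ into $L^p(\KK,\mathfrak{m})$. Since $\BB^p$ is by definition the abstract completion of $\AA$ modulo $\{|\cdot|_p=0\}$ and $C(\KK)$ is dense in $L^p(\KK,\mathfrak{m})$ for $1\le p<\infty$, the isometry extends uniquely to the claimed isometric isomorphism $\BB^p/\overset{\BB^p}{=}\,\cong L^p(\KK,\mathfrak{m})$; the case $p=\infty$ reduces to the already established $\AA\cong C(\KK)\subset L^\infty(\mathfrak{m})$. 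The main obstacle I expect is the bookkeeping in (iv)---in particular verifying that the $\BB^p$-equivalence on $\AA$ matches almost-everywhere equivalence on $\KK$ with respect to $\mathfrak{m}$---together with the joint-continuity verification in (ii) at points of $\KK\setminus\hat{\R^d}$, which is what forces one to work with functions on $\KK$ rather than merely on $\R^d$.
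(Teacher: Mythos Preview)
The paper does not actually prove Theorem~\ref{T1}; it is stated in Appendix~\ref{A:1} as a result established in \cite{AFS} and is simply recalled without argument, so there is no ``paper's own proof'' to compare against here.

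Your sketch is the standard Gelfand--Riesz route and is essentially what any proof of this statement amounts to; parts (i)--(iii) are correct as written, and the joint-continuity argument in (ii) via the splitting $\|\tau_{y_n}f-\tau_y f\|_\infty+|(\tau_y f)(\chi_n)-(\tau_y f)(\chi)|$ is exactly the right mechanism (and is in fact the same computation the paper uses in Step~1 of the proof of Theorem~\ref{T:1.1} in the concrete stochastic setting). The one genuine loose end is your treatment of $p=\infty$ in (iv): the identification $\AA\cong C(\KK)$ does \emph{not} give $\BB^\infty/\!\overset{\BB^\infty}{=}\,\cong L^\infty(\KK,\mm)$, since $C(\KK)\subsetneq L^\infty(\KK,\mm)$ in general, and the paper itself only defines $\BB^p$ via completion for $1\le p<\infty$. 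You should either interpret $\BB^\infty$ as the dual of $\BB^1$ (which then is $L^\infty(\mm)$ by duality with the case $p=1$), or restrict the claim to $1\le p<\infty$. The concern you flag at the end about matching $\BB^p$-equivalence on $\AA$ with $\mm$-a.e.\ equivalence on $\KK$ is not an obstacle: once $|f|_p^p=\int_\KK|i(f)|^p\,d\mm$ is established on $\AA$, the isometry automatically descends to the quotients.
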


A group of unitary operators
$T(y):\BB^2\to\BB^2$ is then defined by setting $[T(y)f](\cdot) := f(T(y,\cdot))$. Since the elements of $\AA$ are uniformly continuous in $\R^d$,
the group $\{T(y)\}$ is strongly continuous, i.e. $T(y)f\to f$
in $\BB^2$ as $y\to 0$ for all $f\in\BB^2$. The notion of invariant
function is introduced then by simply
saying that a function in $\BB^2$ is {\em invariant} if
$T(y)f\overset{\BB^2}{=} f$, for all $y\in\R^n$. More clearly,
$f\in\BB^2$ is invariant if
\begin{equation}\label{e1.INV}
\oM\bigl(|T(y)f-f|^2\bigr)=0,\qquad \forall y\in\R^n.
\end{equation}
The concept of ergodic algebra is then introduced as follows.

\begin{definition}\label{D:6} An algebra $\AA$ w.m.v.\ is called {\em ergodic} if any invariant function
$f$ belonging to the corresponding space $\BB^2$ is equivalent (in $\BB^2$) to a constant.
\end{definition}

In \cite{JKO}  it is also given an alternative definition of ergodic
algebra which is shown therein to be equivalent to
Definition~\ref{D:6}, by using  von~Neumann mean ergodic theorem.
We state that as the following lemma, whose detailed proof may be
found in \cite{JKO}, p.247.

\begin{lemma}\label{L:1.6} Let $\AA$
be an algebra with mean value on $\R^d$. Then $\AA$ is ergodic
if and only if
\begin{equation}\label{eL1.6}
\lim_{t\to\infty}M_y\left(\bigl|\frac{1}{|B(0;t)|}
\int_{B(0;t)}f(x+y)\,dx-M(f)\bigr|^2\right)=0
\qquad\forall f\in\AA.
\end{equation}
\end{lemma}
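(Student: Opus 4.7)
The plan is to translate \eqref{eL1.6} into a Hilbert-space convergence statement and then invoke von Neumann's mean ergodic theorem for the $d$-parameter unitary group $\{T(y)\}_{y\in\R^d}$ acting on $\BB^2$. By Theorem~\ref{T1}(iv), $\BB^2/\overset{\BB^2}{=}$ is isometrically isomorphic to the Hilbert space $L^2(\KK,\mm)$ with inner product $\la f,g\ra=\oM(fg)$, and the paragraph following the theorem states that $\{T(y)\}$ extends to a strongly continuous group of unitary operators on $\BB^2$. Define the ball average
$$
A_t f := \frac{1}{|B(0;t)|}\int_{B(0;t)} T(x)f\,dx,
$$
interpreted as a Bochner integral in $\BB^2$. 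Since $[T(x)f](y)=f(y+x)$, evaluating $A_tf$ at $y$ gives $\frac{1}{|B(0;t)|}\int_{B(0;t)}f(x+y)\,dx$, so \eqref{eL1.6} is precisely the statement that $A_tf \to \oM(f)$ in $\BB^2$ for every $f\in\AA$.

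For the implication (ergodic $\Rightarrow$ \eqref{eL1.6}), I invoke the multi-dimensional mean ergodic theorem for strongly continuous unitary groups: $A_tf$ converges in $\BB^2$ to $Pf$, where $P$ is the orthogonal projection onto the invariant subspace $H_{\mathrm{inv}}:=\{g\in\BB^2:T(y)g\overset{\BB^2}{=}g\ \forall y\in\R^d\}$. This is the standard consequence of the spectral theorem applied to the commuting family of skew-adjoint generators. Ergodicity of $\AA$ means, by Definition~\ref{D:6}, that $H_{\mathrm{inv}}$ consists of constants in $\BB^2$. Since the constant function $\mathbf{1}$ has unit norm and $\la f,\mathbf{1}\ra=\oM(f)$, the projection $Pf$ equals $\oM(f)\,\mathbf{1}$, so $A_tf\to \oM(f)$ in $\BB^2$, which is \eqref{eL1.6}.

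Conversely, assume \eqref{eL1.6} holds for every $f\in\AA$. Each $A_t$ is a contraction on $\BB^2$ (being a convex average of unitaries), and $\AA$ is dense in $\BB^2$, so a standard approximation argument extends the convergence $A_tf\to\oM(f)$ to all $f\in\BB^2$, where $\oM$ also extends by continuity. Now let $f\in\BB^2$ be invariant, i.e.\ $T(y)f\overset{\BB^2}{=}f$ for all $y\in\R^d$. Then $A_tf\overset{\BB^2}{=}f$ for every $t>0$, and passing to the limit yields $f\overset{\BB^2}{=}\oM(f)$. Thus $f$ is equivalent to a constant, and $\AA$ is ergodic.

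The main obstacle is invoking the $d$-dimensional mean ergodic theorem for ball averages; once that is available, the rest is a transparent translation between the seminorm formulation in \eqref{eL1.6} and the Hilbert-space formulation $A_tf\to Pf$. A minor technical point is the identification of the projection onto constants with $\oM$, which follows from $\mm$ being a probability measure so that $\mathbf{1}$ is a unit vector and $\la f,\mathbf{1}\ra=\oM(f)$.
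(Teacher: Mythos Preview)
Your proposal is correct and follows exactly the route the paper indicates: the text immediately preceding the lemma states that the equivalence is established ``by using von~Neumann mean ergodic theorem,'' and the paper itself gives no proof, only a citation to \cite{JKO}, p.~247. Your argument---recasting \eqref{eL1.6} as $A_tf\to Pf$ in the Hilbert space $\BB^2/\overset{\BB^2}{=}\cong L^2(\KK,\mm)$, identifying $P$ with projection onto constants under the ergodicity hypothesis, and running the density/contraction argument for the converse---is the standard one and matches the cited source.
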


Finally, the following theorem stated in \cite{JKO}, whose detailed proof is given in \cite{AFS}, is a sort of converse of Theorem~\ref{T1}.

\begin{theorem}[{\em cf.} \cite{AFS}] \label{T:1.5}  Let ${\mathcal Q}$ be a compact space, $T(x):{\mathcal Q}\to {\mathcal Q}$, $x\in\R^d$, a continuous
dynamical system, $\mu$ a Radon probability invariant measure in ${\mathcal Q}$, and
$V\subset C({\mathcal Q})$ a separable subspace. Then, for $\mu$-almost all $\om\in{\mathcal Q}$, there is an ergodic algebra containing the set of realizations 
$\big{\{}f(T(\cdot)\omega)\big{\}}_{f\in V}$ .
\end{theorem}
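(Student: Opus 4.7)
The plan is to exhibit the desired ergodic algebra as the image under the evaluation map $\pi_\omega\colon h\mapsto h(T(\cdot)\omega)$ of a suitable separable, translation-invariant, unital, closed subalgebra $\AA$ of $C(\mathcal{Q})$ containing $V$. The mean-value property and ergodicity would then follow, for $\mu$-almost every $\omega$, from the multidimensional Birkhoff ergodic theorem together with the ergodic decomposition of $\mu$.

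First I would construct $\AA$ as the smallest closed subalgebra of $C(\mathcal{Q})$ that contains $V\cup\{1\}$ and is invariant under all the operators $h\mapsto h\circ T(y)$, $y\in\R^d$. Separability of $\AA$ would follow from separability of $V$ combined with the sup-norm continuity of $y\mapsto h\circ T(y)$, itself a consequence of the joint continuity of $T$ and the uniform continuity of $h$ on the compact space $\mathcal{Q}$. Then, as in Step~1 of the proof of Theorem~\ref{T:1.1}, I would quotient $\mathcal{Q}$ by the equivalence relation induced by $\AA$ to assume that $\AA$ separates points, making $\mathcal{Q}$ metrizable without loss of generality.

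Next I would invoke the ergodic decomposition of $\mu$, which produces a measurable map $\omega\mapsto\mu_\omega$ into the $T$-ergodic probability measures on $\mathcal{Q}$, satisfying $\mu=\int\mu_\omega\,d\mu(\omega)$ and $\mu_{T(y)\omega}=\mu_\omega$ on a $T$-invariant full-measure set. For each $h$ in a countable dense subset $\AA_0\subset\AA$, the multidimensional Birkhoff theorem would give
\begin{equation*}
\oM\bigl(h(T(\cdot)\omega)\bigr)=\lim_{R\to\infty}\frac{1}{|B(0;R)|}\int_{B(0;R)}h(T(x)\omega)\,dx=\int h\,d\mu_\omega
\end{equation*}
off a $\mu$-null set. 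Intersecting over $\AA_0$ and extending to all of $\AA$ by the trivial sup-norm bound yields a full-measure set $\Omega_*$ on which the mean value of every $\pi_\omega(h)$, $h\in\AA$, exists. Setting $\AA_\omega:=\pi_\omega(\AA)$, I would then verify directly that $\AA_\omega$ is a closed, translation-invariant, unital subalgebra of $\BUC(\R^d)$ containing $\{f(T(\cdot)\omega):f\in V\}$ and satisfying conditions (A)--(D) of Definition~\ref{D:5}.

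The hard part will be verifying ergodicity through Lemma~\ref{L:1.6}. Fixing $h\in\AA$, $\omega\in\Omega_*$, and writing $\Psi_t(\eta):=\frac{1}{|B(0;t)|}\int_{B(0;t)}h(T(x)\eta)\,dx$, the identity $\pi_\omega(h)(x+y)=h(T(x)T(y)\omega)$ rewrites the quantity inside $\oM_y$ in \eqref{eL1.6} as $|\Psi_t(T(y)\omega)-\int h\,d\mu_\omega|^2$. A second application of Birkhoff, on a further full-measure subset of $\Omega_*$, combined with the invariance $\mu_{T(y)\omega}=\mu_\omega$, would identify $\oM_y$ of this expression with $\int|\Psi_t-\int h\,d\mu_\omega|^2\,d\mu_\omega$; ergodicity of $\mu_\omega$ and von~Neumann's mean ergodic theorem then force this to tend to $0$ as $t\to\infty$. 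The delicate point is to arrange the countably many $\mu$-null exceptional sets -- one per $h\in\AA_0$ and per $t$ in a countable dense subset of $(0,\infty)$ -- into a single $\mu$-null set, so that the entire chain of conclusions holds for a single full-measure choice of $\omega$.
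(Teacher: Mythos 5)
The paper itself does not prove Theorem~\ref{T:1.5}; it only states it and refers to \cite{AFS} for a detailed proof, so there is no in-text argument against which to compare. Taken on its own terms, your proposal is essentially correct and follows the natural route: build the minimal closed, $T$-invariant, unital subalgebra $\AA\subset C(\mathcal Q)$ containing $V$, quotient so that $\AA=C(\mathcal Q)$ with $\mathcal Q$ metrizable, and realize the sought algebra as $\AA_\omega=\pi_\omega(\AA)$. The ergodic decomposition is indeed indispensable here since $\mu$ is only assumed invariant, not ergodic, and the identity $\pi_\omega(h)(x+y)=h(T(x)T(y)\omega)$ plus a second averaging correctly reduces the verification of \eqref{eL1.6} to von~Neumann's theorem applied to the ergodic component $\mu_\omega$. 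The exceptional-set bookkeeping you flag is the right worry and is handled exactly as you say, by separability of $\AA$ and a countable dense set of radii.

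Two points deserve to be made explicit when you write this out. First, the closedness of $\pi_\omega(\AA)$ in $\BUC(\R^d)$ is not automatic for a bounded algebra homomorphism; it holds here because $\pi_\omega$ factors through the (surjective, by Tietze) restriction $C(\mathcal Q)\to C(\overline{O_\omega})$, where $\overline{O_\omega}$ is the orbit closure, followed by $g\mapsto g(T(\cdot)\omega)$, which is an \emph{isometry} onto its image because $\{T(y)\omega:y\in\R^d\}$ is dense in $\overline{O_\omega}$. Equivalently you may simply take the sup-norm closure, since conditions (A)--(D) and \eqref{eL1.6} all pass to closures. Second, the mean value in Definition~\ref{D:3} is the strong one (testing against arbitrary bounded measurable sets, not just balls centered at the origin), so the ``multidimensional Birkhoff'' you invoke should be the Wiener ergodic theorem in the form that yields convergence of averages over $\epsilon^{-1}A$ for every bounded measurable $A$; this is standard but stronger than the ball-average statement and is the version actually used in \cite{JKO}. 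An alternative and slightly cleaner finish to the ergodicity part is to observe that, under the isometric identification $\AA_\omega\cong C(\overline{O_\omega})$, the compact space $\mathcal K$ and invariant measure $\mathfrak m$ furnished by Theorem~\ref{T1} are precisely $\overline{O_\omega}$ and $\mu_\omega$, so ergodicity of $\AA_\omega$ follows directly from ergodicity of $\mu_\omega$ via $\BB^2\cong L^2(\overline{O_\omega},\mu_\omega)$, without invoking Lemma~\ref{L:1.6}; both routes are valid.
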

 
We finally remark that $\APs(\R^d)$ is an ergodic algebra. Indeed, since $\APs(\R^d)$ is defined simply as the algebraic sum of $\AP(\R^d)$ and $\NN(\R^d)$, and since, for any $g\in \BUC(\R^d)$ and $h\in\NN(\R^d)$, $gh\in \NN(\R^d)$,  we immediately check that $\APs(\R^d)$ is a sub algebra  of  $\BUC(\R^d)$ and it is trivially invariant by translations. It is also immediate that any element of $\APs(\R^d)$ possesses a mean value. So, the only property to be checked is that $\APs(\R^d)$ is a closed subspace of $\BUC(\R^d)$. This is indeed a consequence of the fact that both $\AP(\R^d)$ and $\NN(\R^d)$ are closed subspaces of $\BUC(\R^d)$ such that $\AP(\R^d)\cap\NN(\R^d)=\{0\}$, and so we can apply the closed graph theorem to the mapping $\AP(\R^d)\X\NN(\R^d)\to \APs(\R^d)$, $(g,h)\mapsto g+h$.

\section{Weakly Almost Periodic Functions} \label{A:2}

Examples of ergodic algebras include the periodic continuous functions, the almost periodic functions, and the Fourier-Stieltjes transforms, studied in \cite{FS}. 
More generally, all the just mentioned ergodic algebras are subalgebras of a strictly larger ergodic algebra,  that is the algebra of the (real-valued) {\em weakly almost periodic functions} in $\R^d$, $\WAP(\R^d)$. It is defined as the subspace of the space of the bounded continuous functions,  $C(\R^d)$, formed by those  $f: \R^d\to\R$, satisfying the property that any sequence of its translates $(f(\cdot+\l_i))_{i\in\N}$ possesses a subsequence $(f(\cdot+\l_{i_k}))_{k\in\N}$ weakly converging in $C(\R^d)$.
This space was introduced and its main properties were obtained by Eberlein in \cite{E1} (see also \cite{E2}). In particular, in \cite{E1}, Eberlein proved that $\WAP(\R^d)$ satisfies all the properties defining an algebra w.m.v. It is immediate to see, from the definition, that $\operatorname{WAP}(\R^d)\supset \AP(\R^d)$,  where the latter denotes the space of almost periodic functions. Indeed, for functions in $\AP(\R^d)$, Bochner theorem gives the relative compactness of the translates $f(\cdot+\l)$, $\l\in\R^n$,  in the $\sup$-norm (see, e.g., \cite{B}). 

We summarize in the following lemma the properties of $\WAP(\R^d)$ which were essentially proved by Eberlein in   \cite{E1}. The proof below is borrowed from \cite{FSV} for the reader's convenience. 

\begin{lemma}[{\em cf.}  \cite{E1}] \label{L:2.wap} $\WAP(\R^d)$ is an ergodic algebra which contains the algebra of Fourier-Stieltjes transforms 
$\operatorname{FS}(\R^d)$. 
\end{lemma}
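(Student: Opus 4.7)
The plan is to verify the four axioms of an algebra with mean value from Definition~\ref{D:5}, then the ergodicity criterion of Lemma~\ref{L:1.6}, and finally the inclusion $\operatorname{FS}(\R^d)\subset\WAP(\R^d)$. Essentially all of these statements go back to Eberlein~\cite{E1}, so the work is largely organizational.

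The easy axioms come first. Translation invariance (B) is immediate from the definition, since a sequence of translates of a translate is itself a sequence of translates; the constant function $1$ is trivially in $\WAP(\R^d)$; and the inclusion $\WAP(\R^d)\subset\BUC(\R^d)$ follows because any subsequential weak limit of $f(\cdot+y_n)$ with $y_n\to 0$ must coincide with $f$, forcing norm convergence. Closure of $\WAP(\R^d)$ in the sup-norm (part of (D)) is a direct application of the Eberlein--Smulian theorem to the orbits. The only delicate axiom is closure under products (A): pointwise multiplication is not jointly weakly continuous, so a genuine argument is needed, and this is the main technical content of \cite{E1}, resting on Grothendieck's iterated-limit characterization of weak compactness in spaces of continuous functions.

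For the mean value (C) and ergodicity, I would use that the orbit $O(f)=\{f(\cdot+y):y\in\R^d\}$ of $f\in\WAP(\R^d)$ has weakly relatively compact closure, so its weakly closed convex hull $K(f)$ is weakly compact and convex. The semigroup of averaging operators induced by compactly supported probability densities acts on $K(f)$, and Ryll--Nardzewski's fixed-point theorem provides a common fixed point, which must be a translation-invariant element of $K(f)$, hence a constant, identified as $M(f)$. Eberlein moreover proves the stronger uniform mean-value statement
\begin{equation*}
\lim_{t\to\infty}\sup_{y\in\R^d}\left|\frac{1}{|B(0;t)|}\int_{B(0;t)}f(x+y)\,dx-M(f)\right|=0,
\end{equation*}
from which the criterion \eqref{eL1.6} is automatic, since the $M_y$-mean of a function tending uniformly to zero is itself zero.

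Finally, for $\operatorname{FS}(\R^d)\subset\WAP(\R^d)$, let $f(x)=\int_{\R^d}e^{i\xi\cdot x}\,d\mu(\xi)$ with $\mu$ a finite complex Borel measure on $\R^d$, and let $(\lambda_k)\subset\R^d$ be any sequence. The exponentials $\xi\mapsto e^{i\xi\cdot\lambda_k}$ lie in the unit ball of $L^\infty(|\mu|)$, so Banach--Alaoglu together with separability of $L^1(|\mu|)$ extracts a subsequence along which $e^{i\xi\cdot\lambda_{k_j}}$ converges weakly-$*$ to some $h\in L^\infty(|\mu|)$. Dominated convergence yields pointwise convergence of the translates $f(\cdot+\lambda_{k_j})$ to $g(x):=\int e^{i\xi\cdot x}h(\xi)\,d\mu(\xi)$, with uniform bound $|\mu|(\R^d)$; a standard application of dominated convergence on the level of continuous linear functionals on $\BUC(\R^d)$ then upgrades pointwise to weak convergence, so $f\in\WAP(\R^d)$. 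The main obstacle throughout is still axiom (A), closure under products, which is the only step where the full strength of Grothendieck's double-limit criterion is needed; every other property either is routine or reduces, via the weak compactness of orbits, to a fixed-point or Eberlein--Smulian argument.
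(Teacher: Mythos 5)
Your proposal is broadly correct in structure, but it takes several routes that genuinely differ from the paper's proof, and one step is compressed to the point of being a gap.

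The most substantial difference is in establishing the mean value and ergodicity. You propose to apply the Ryll--Nardzewski fixed-point theorem to the weakly compact convex hull of the orbit, producing a translation-invariant constant as the mean, and then to invoke Eberlein's \emph{uniform} ergodic theorem for the criterion \eqref{eL1.6}. The paper instead transfers everything to the compactification $\KK_0$, constructs an invariant measure by Krylov--Bogolyubov, obtains a $\BB^2$-limit of averages from the von~Neumann mean ergodic theorem, and then identifies that limit as a constant by a compactness argument using the weak compactness of $\overline{\co(O(f))}$; ergodicity is then obtained by ``repeating line by line'' that same convexity argument through Lemma~\ref{L:1.6}. Both routes are valid and classical; yours is shorter at the cost of invoking Ryll--Nardzewski and the uniform ergodic theorem, while the paper's uses only tools already set up in Appendix~A. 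Similarly, for the algebra property (A), the paper sidesteps Grothendieck's double-limit criterion by noting that, after identification with $C(\KK_0)$, weak convergence of bounded sequences reduces to pointwise convergence on the compact $\KK_0$ (dominated convergence), after which closure under products is immediate; you instead go directly through Grothendieck's criterion.

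There is a real gap in your argument for $\WAP(\R^d)\subset\BUC(\R^d)$. You argue that any subsequential weak limit of $f(\cdot+y_n)$ with $y_n\to0$ must coincide with $f$, ``forcing norm convergence.'' Weak convergence does not force norm convergence, and nothing in what precedes justifies the upgrade. As written, you have only that $y\mapsto \tau_y f$ is weakly continuous at $0$, which is strictly weaker than uniform continuity of $f$. To fix this one either invokes the nontrivial theorem that a locally bounded, weakly continuous group of operators is strongly continuous, or, as the paper does, argues directly by contradiction: if $|\xi_k-\s_k|\to0$ but $|f(\xi_k)-f(\s_k)|\ge\ve_0$, then $g_k=f(\cdot+\xi_k)-f(\cdot+\s_k)$ has a weak limit $g$ with $|g(0)|\ge\ve_0$, yet every local average of $g_k$ tends to zero, so $g\equiv 0$ --- a contradiction. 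Finally, on the Fourier--Stieltjes inclusion: your Banach--Alaoglu argument in $L^\infty(|\mu|)$ is a clean alternative to the paper's reduction to compactly supported measures, but your claim that dominated convergence ``on the level of continuous linear functionals on $\BUC(\R^d)$'' upgrades pointwise convergence on $\R^d$ to weak convergence in $C_b(\R^d)$ is not literally correct, since $\BUC(\R^d)^*$ is not represented by measures on $\R^d$; this step really does need Grothendieck's criterion (or the paper's $\KK_0$ identification plus a further argument). The paper glosses over the same point, so this is not a distinguishing defect, but you should be aware that pointwise convergence on $\R^d$ alone is not enough.
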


\begin{proof} The fact that  $\WAP(\R^d)\subset \BUC(\R^d)$ is proved by contradiction. Assume, on the contrary, that one can find points $\xi_k,\s_k$, with $|\xi_k-\s_k|\to0$ as $k\to\infty$, such that $|f(\xi_k)-f(\s_k)|\ge\ve_0>0$, for all $k\in\N$. Define $g_k(x)=f(x+\xi_k)-f(x+\s_k)$. By passing to a subsequence, we may assume that  $g_k$ converges weakly to some $g\in C_b(\R^n)$; in particular $|g(0)|\ge\ve_0>0$. On the other hand, if $B_r(\xi)$ is the ball of radius $r>0$ around $\xi\in\R^n$, 
  \begin{multline*}
 \left|\int_{B_r(0)}g_k(x)\,dx\right|\le\left|\int_{B_r(\xi_k)} f(x)\,dx-\int_{B_r(\s_k)}f(x)\,dx\right|\le \|f\|_\infty \left|\left(B_r(\xi_k)\setminus B_r(\s_k)\right)\cup \left(B_r(\s_k)\setminus B_r(\xi_k)\right) \right|
  \\= \|f\|_\infty \bigl|\left(B_r(0)\setminus B_r(\xi_k-\s_k)\right)\cup \left(B_r(\xi_k-\s_k)\setminus B_r(0)\right)\bigr|\to0, \quad\text{as $k\to\infty$, for all $r>0$},
  \end{multline*}
  which gives the desired contradiction.  We also remark that, if $g\in C_b(\R^d)$ is the weak limit of a sequence of translates $f(\cdot+\l_k)$, with $f\in \WAP(\R^n)$, then 
  $g\in\BUC(\R^d)$. Indeed, weak convergence impies pointwise convergence in $\R^n$, in particular, and so, since the family $\{f(\cdot+\l_k)\}$ is equicontinuous, for 
  $f\in\BUC(\R^d)$,  it follows that $g\in\BUC(\R^d)$. 

To have a better idea of this space,  consider \v Cech compactification of $\R^d$, associated with the algebra $C_b(\R^d)$ (see, e.g., \cite{DS}), denote it by
 $\KK_0$. There is an isometric isomorphism between $C_b(\R^d)$ and $C(\KK_0)$, and weak convergence in $C_b(\R^n)$ is then translated to pointwise convergence in 
 $C(\KK_0)$. So, the weakly almost periodic functions are then identified with the functions in $C(\KK_0)$ whose sequences of translates, $(f(\cdot+\l_i))_{i\in\N}$, always possess a subsequence converging pointwise to a function $g\in C(\KK_0)$. By this characterization, it is immediate that $\WAP(\R^d)$ is an algebra in $C_b(\R^d)$, closed in the $\sup$ norm. For the following considerations on $\WAP(\R^d)$, instead of the compactification provided by all space $C_b(\R^d)$, it will be more convenient to consider the compactification provided by the algebra $\BUC(\R^d)$, which is then identified with the compact $\KK_0/\sim$ with the topology $\tau_0$ generated by the functions in 
 $\BUC(\R^d)$, where $\sim$ is the equivalence relation whose quotient makes $\tau_0$ Hausdorff. So, we have the identification of $\BUC(\R^d)$ with the space of continuous functions $C(\KK_0/\sim,\tau_0)$. In what follows we omit the quotient, writing simply $\KK_0$, instead of $\KK_0/\sim$, and will assume $\KK_0$ to be endowed with the topology 
 $\tau_0$.

  Existence of mean value for functions in $\WAP(\R^d)$ may be seen as follows.  First, by Theorem~\ref{T1}, the translations $T(y)f(\cdot)=f(\cdot+y)$ may be extended to $\KK_0$ to form a continuous dynamical system in $\KK_0$. A well known theorem by Krylov and Bogolyubov  asserts the existence of a probability measure $\mu$ in $\KK_0$, invariant by $\{T(y)\,:\,y\in\R^n\}$ (see, e.g., \cite{NS}; the extension of the proof given therein, for compact metric spaces, to general compact topological spaces is straightforward). Also,  von~Neumann mean ergodic theorem (see, e.g., \cite{DS}) implies that, given $f\in \WAP(\R^n)$, $\oM_L(f)(z):=\medint_{B_L(0)}f(T(y)z)\,dy$ converges, as $L\to\infty$, in $L^2(\KK_0,\mu)$, to a function $g(z)\in L^2(\KK_0,\mu)$ which is invariant, that is, $g(z+y)=g(z)$, for $\mu$-a.e.\ $z\in\KK_0$,  for all $y\in\R^d$. Observe that, for any $\xi\in\R^d$, 
 $$
 \Medint_{B_L(\xi)}f(T(y)\,\cdot )\,dy=\Medint_{B_L(0)}f(T(y+\xi)\,\cdot)\,dy=T(\xi)\oM_L(f)(\cdot)\to T(\xi)g(\cdot)=g(\cdot),\quad\text{as $L\to\infty$, in $L^2(\KK_0,\mu)$},
 $$ 
 by the continuity of $T(\xi):L^2(\KK_0,\mu)\to L^2(\KK_0,\mu)$, and the invariance of $g$. 
 Now, $\oM_L(f)(z)$ may be arbitrarily approximated in $C(\KK_0)$ by a finite convex combination of translates of $f$, $g_L(\cdot)=\theta_L^1 f(\cdot+\l_L^1)+\cdots+\theta_L^{K(L)}f(\cdot+\l_L^{K(L)})$, and,  taking $L=1,2,\cdots$, we may arrange that $g_L\to g$, in $L^2(\KK_0,\mu)$. Let us consider the separable closed subspace  $S\subset C(\KK_0)$ generated by the translates of $f$, $f(\cdot+\l)$, $\l\in\R^d$. The dual of $S$,  is a separable space which, by Hahn-Banach, may be viewed as a subspace of the dual of $C(\KK_0)$.
We may then define a metric $d(f,g)$ in $S$, whose induced topology is equivalent to the weak topology of $S$, and satisfies $d(f+h,g+h)=d(f,g)$.   Since the set $O(f)=\{f(\cdot+\l)\,:\,\l\in\R^d\}$ is 
pre-compact, we deduce that it is totally bounded in the metric $d$. But then, since $S$ with the weak topology is locally convex, by a well known result (see, e.g.,  \cite{Ru0}, p.72) the convex hull of $O(f)$, $\co(O(f))$, is totally bounded, and, hence, $\overline{\co(O(f))}$ is compact in the weak topology. In particular, by passing to a subsequence if necessary, 
we deduce that $g_L$ weakly converges to some $\tilde g\in C(\KK_0)$, that is $g_L(z)\to \tilde g(z)$, for all $z\in\KK_0$. But then, $\tilde g(z)=g(z)$, $\mu$-a.e., and by the invariance of $g$, we deduce that $g$ is constant  and we denote it by $\mv f$. Hence, for any $\xi\in\R^d$,  and all $z\in \KK_0$,  the averages 
  $\medint_{B_L(\xi)}f(T(y)\,z )\,dy$ converge to $\mv f$, which does not depend on either $z$ or $\xi$, and this implies that $f$ possesses mean value and this  is $\mv f$. 
  
 Taking the invariant measure $\mu$, above, as the measure induced by the mean value, we see that  the proof just given for the existence of the mean value for functions in 
 $\WAP(\R^d)$ may be repeated, line by line, to prove  the ergodicity of this algebra w.m.v., as a straightforward application of Lemma~\ref{L:1.6}. In sum, 
 $\WAP(\R^d)$ is an ergodic algebra.

 We recall that  the Fourier-Stieltjes algebra $\operatorname{FS}(\R^n)$ is defined as the closure in the $\sup$-norm of functions $f:\R^d\to\R$ which admit a representation as 
 \begin{equation}\label{ewap1}
 f(x)=\int_{\R^d}e^{ix\cdot y}\,d\mu(y),
 \end{equation}
 for some signed Radon measure in $\R^d$ with finite total variation. If $f$ admits the representation in \eqref{ewap1}, then any of its translates, $f(\cdot+\l)$, admits a similar
 representation with $\mu(y)$ replaced by $e^{i\l\cdot y}\mu(y)$. Suppose first, that $f\in\operatorname{FS}(\R^d)$, admits a representation as in \eqref{ewap1}, with $\supp\mu\subset  B_R(0)$,  for some $R>0$.   Given a sequence of translates, $f(\cdot+\l_n)$, we have that these translates satisfy an equation like
 \eqref{ewap1}, with $\mu(y)$ replaced by $\mu_n(y):=e^{\l_n\cdot y}\mu(y)$, and so $|\mu_n|(\R^d)=|\mu|(\R^d)$, and $\supp\mu_n=\supp\mu$. Since the space of Radon measures with finite total variation and support in a compact $K\subset\R^d$, $\M(K)$, is the dual of $C(K)$, we may extract a subsequence from $\mu_n$, still labeled $\mu_n$,
 such that $\mu_n\wto\nu$ in the weak-star topology of $\M(K)$, for some $\nu\in\M(K)$. Therefore, $f(\cdot+\l_n)$ pointwise converges to $g\in C_b(\R^d)$, where
 $$
 g(x)=\int_{\R^d}e^{ix\cdot y}\nu(y).
 $$
 Since any function in $\operatorname{FS}(\R^d)$ is the uniform limit of functions satisfying a representation like \eqref{ewap1}, for a signed Radon measure $\mu$ with compact support and finite total variation, we conclude that  $\operatorname{FS}(\R^d)\subset  \WAP(\R^d)$. 
\end{proof}
 
Another very important result on the ergodic algebra $\WAP(\R^d)$ was established by Eberlein in \cite{E2}. In sum, it shows that $\WAP(\R^d)\subset \APs(\R^d)$. As yet, it is not known if the latter  is an strict inclusion. 

\begin{lemma}[{\em cf.} \cite{E2}] \label{L:A.wap2}  Given any $\varphi\in\WAP(\R^d)$, then we can write,
\begin{equation}\label{e3.3.1}
\varphi=\varphi_*+\varphi_{\NN},
\end{equation}
where $\varphi_*\in\AP(\R^d)$, and $M(|\varphi_{\NN}|^2)=0$. 
\end{lemma}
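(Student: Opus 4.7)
My plan is to prove the decomposition by Eberlein's approach (cf.\ \cite{E2}), combining the weak compactness of the translation orbit of $\varphi$ with the Hilbert-space structure of the ergodic algebra $\WAP(\R^d)$ established in Lemma~\ref{L:2.wap}. By that lemma together with Theorem~\ref{T1}, $\WAP(\R^d)$ is identified with $C(\KK_0)$ for a compact dynamical system $(\KK_0, T(\cdot), \mu)$, the Besicovitch space $\BB^2$ is $L^2(\KK_0,\mu)$, and translations act as a strongly continuous group of unitaries there. I work temporarily with complex-valued functions and recover the real statement by taking real parts at the end; the characters $e^{i\lambda\cdot}$ lie in (the complexification of) $\AP(\R^d)$ and form an orthonormal family in $L^2(\KK_0,\mu)$.

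I build the candidate almost-periodic part via Fourier--Bohr coefficients. For each $\lambda\in\R^d$ set $c_\lambda := M(e^{-i\lambda x}\varphi(x))$. By Bessel in $L^2(\KK_0,\mu)$, $\sum_\lambda|c_\lambda|^2 \le M(|\varphi|^2) < \infty$, so $\Lambda := \{\lambda : c_\lambda \ne 0\}$ is at most countable. Enumerating $\Lambda = \{\lambda_j\}_{j\in\N}$ and writing $P_n(x) := \sum_{j\le n} c_{\lambda_j}e^{i\lambda_j x} \in \AP(\R^d)$, a direct computation using the translation-invariance of $M$ and the orthonormality of the characters reduces
\[
M(|\varphi - P_n|^2) = M(|\varphi|^2) - \sum_{j\le n}|c_{\lambda_j}|^2.
\]

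The main obstacle, and the heart of Eberlein's contribution, is the Parseval identity $\sum_\lambda|c_\lambda|^2 = M(|\varphi|^2)$; equivalently, the spectrum of a WAP function is purely discrete. My route is to show that the autocorrelation
\[
a_\varphi(y) := M(\overline{\varphi(x)}\varphi(x+y))
\]
is itself an \emph{almost periodic} function of $y$, not merely weakly so. This is exactly where the weak compactness of $O(\varphi) = \{T(y)\varphi\}$ enters decisively: the pairing $\psi \mapsto M(\overline{\varphi}\psi)$ is sequentially continuous on weakly compact subsets of $\BUC(\R^d)$, which, applied to weakly convergent subsequences of translates of $\varphi$, yields Bochner norm-precompactness of the translates of $a_\varphi$ and so places $a_\varphi$ in $\AP(\R^d)$. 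Its positive definiteness together with almost periodicity then give the absolutely convergent Bohr expansion $a_\varphi(y) = \sum_\lambda|c_\lambda|^2 e^{i\lambda y}$, and evaluating at $y=0$ yields Parseval.

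With Parseval, the displayed identity above gives $M(|\varphi - P_n|^2) \to 0$, so $(P_n)$ is Cauchy in $\BB^2$. To upgrade this to uniform convergence, I would pass to Bochner--Fej\'er-type convex averages $P_n^{BF}$ of the $(P_n)$; by the $\AP$ analogue of Fej\'er's theorem applied to the almost periodic function $a_\varphi$, these averages converge uniformly on $\R^d$, and closedness of $\AP(\R^d)$ in $\BUC(\R^d)$ produces a limit $\varphi_* \in \AP(\R^d)$ with $P_n^{BF} \to \varphi_*$ uniformly and in $\BB^2$. Setting $\varphi_\NN := \varphi - \varphi_*$ yields a WAP function with all Fourier--Bohr coefficients zero, and a last application of Parseval gives $M(|\varphi_\NN|^2) = 0$, establishing \eqref{e3.3.1}. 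Taking real parts recovers the real-valued decomposition of the lemma.
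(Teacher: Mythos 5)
Your route is Eberlein's original one (via the autocorrelation function and Parseval), which is genuinely different from the paper's: the paper instead builds $\varphi_*$ directly as a weak cluster point of the convolutions $\varphi *_{{}_\oM}\phi_\a$ against an $\AP(\R^d)$ approximate identity, uses weak compactness of $\overline{\co(O(\varphi))}$ plus the fact that $\AP(\R^d)$ is norm-closed and convex (hence weakly closed) to land $\varphi_*$ in $\AP(\R^d)$, and then invokes the $\BB^2$-convergence of the Bohr--Fourier series of $\varphi$ to get $\oM(|\varphi-\varphi_*|^2)=0$. Your plan would be a legitimate alternative, but as written it has two real gaps.

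First, the key claim that $a_\varphi$ is \emph{norm} almost periodic is not established by the argument you give. Weak compactness of $O(\varphi)$ plus continuity of $\psi\mapsto \oM(\overline\varphi\psi)=\int_{\KK_0}\overline\varphi\,\psi\,d\mm$ only shows that if $T(y_k)\varphi\rightharpoonup g$ then $a_\varphi(y_k)\to \oM(\overline\varphi\,g)$, i.e. \emph{pointwise} convergence of the translated functions $T(y_k)a_\varphi$ (at each fixed $y$, by applying the same reasoning to $T(y+y_k)\varphi\rightharpoonup T(y)g$). Bochner's criterion requires sup-norm precompactness, and pointwise convergence does not deliver that. Closing this gap is the real content of Eberlein's theorem and needs a further ingredient — for instance the standard positive-definiteness estimate $|p(s)-p(t)|^2\le 2p(0)\bigl(p(0)-\mathrm{Re}\,p(s-t)\bigr)$ combined with Grothendieck's double-limit characterization of weak compactness, or an argument via the discrete/continuous spectral decomposition of the unitary group on $L^2(\KK_0,\mm)$.

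Second, the final upgrade to a uniform limit is misdirected. Fej\'er's theorem applied to the AP function $a_\varphi$ gives uniform convergence of the Bochner--Fej\'er means \emph{of $a_\varphi$}, not of $\varphi$; $\varphi$ itself is not AP, so you cannot conclude that the convex averages $P_n^{BF}$ converge uniformly. The mechanism that actually works here — and is the one the paper uses — is again weak compactness: the $P_n^{BF}$ are convex combinations of translates of $\varphi$, hence lie in the weakly compact set $\overline{\co(O(\varphi))}$, so a subnet converges weakly to some $\varphi_*$; since each $P_n^{BF}\in\AP(\R^d)$ and $\AP(\R^d)$ is weakly closed, $\varphi_*\in\AP(\R^d)$; and since $P_n^{BF}\to\varphi$ in $\BB^2=L^2(\KK_0,\mm)$ while $P_n^{BF}\to\varphi_*$ weakly in $C(\KK_0)$, one gets $\varphi_*=\varphi$ $\mm$-a.e., i.e. $\oM(|\varphi-\varphi_*|^2)=0$. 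Replacing your Fej\'er step by this weak-compactness argument, and properly justifying that $a_\varphi\in\AP(\R^d)$, would make the proposal sound.
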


 \begin{proof}  If $\K_*,\mm_*$ are the compact topological space and the invariant probability measure associated with  $\AP(\R^d)$  according to Theorem~\ref{T1},  then $\K_*$ is the so called Bohr group, which, in particular, is a topological group and $\mm_*$ coincides with the corresponding Haar measure (see, e.g., \cite{AFS}). Therefore, it is possible to define an approximation of the identity net $\{\phi_\a\}_{\a\in\Lambda}\subset\AP(\R^d)$, with $\a$  running along the directed set  $\Lambda$ of all neighborhoods of the identity, ordered naturally by $\a_1<\a_2$ if $\a_2\subset \a_1$. More specifically, for each $\a\in\Lambda$, $\phi_\a\ge0$, $\supp \phi_\a\subset \a$, and $M(\phi_\a)=1$. We may also assume that 
 $\phi_\a(-x)=\phi_\a(x)$. Therefore, given any $g\in\AP(\R^d)$, we have that $\{g_\a\}_{\a\in\Lambda}$, defined by
 $$
 g_\a(x):=g*_{{}_\oM}\phi_\a(x):=\int_{\K_*}g(x-y)\phi_\a(y)\,d\mm_*(y)
 $$
 is a net in $\AP(\R^d)$ converging uniformly to $g$. Moreover, it is easy to see that, given $\varphi\in\WAP(\R^d)$ and  $\phi\in\AP(\R^d)$, then $\phi*_{{}_\oM}\varphi$, defined by,
 $$
 \phi*_{{}_\oM}\varphi(x):=\int_{\K} \phi(x-y)\varphi(y)\,d\mm(y),
 $$
 belongs to $\AP(\R^d)$, where $\K,\mm$ are the compact topological space and the invariant probability measure associated with $\WAP(\R^d)$ by Theorem~\ref{T1}. Indeed, this follows directly from the fact that, through conveniently chosen partitions of the unity in $\K$, $\phi*_M\varphi$ maybe uniformly approximated by convex combinations of $\varphi(y_i)\phi(x-y_i)$,
 for certain finite sets of points $y_i\in\K$.  In particular, given $\varphi\in\WAP(\R^d)$, we have that the net $\varphi_\a:=\varphi*_{{}_\oM}\phi_\a$ is contained in $\AP(\R^d)$.  Now, the net $\varphi_\a$ belongs to $\overline{\co(O(\varphi))}$, the closure of the convex hull of the set of translates of $\varphi$, which, by the definition of $\WAP(\R^d)$, is compact in the weak topology (again, see, e.g.,  \cite{Ru0}, p.72). Therefore, there is a cluster point $\varphi_*$ of $\varphi_\a$. Since $\varphi_\a\in\AP(\R^d)$ and $\varphi_*$ is the weak limit of a subnet of $\varphi_\a$, we conclude that 
 $\varphi_*\in\AP(\R^d)$.

The second main observation leading to the decomposition, is the fact that any $\varphi\in\WAP(\R^d)$ has a Fourier series converging in $\BB^2$ to $\varphi$, 
that is, 
$$
\varphi(x)= \oM(\phi)+\sum_{j=1}^\infty( a_j \cos \l_j\cdot x+b_j\sin \l_j\cdot x), \quad\text{for certain $\l_j\in\R^d$, $j\in\N$},
$$
in the sense of $\BB^2$,  whose proof follows by standard arguments. In particular, $\varphi$ can be approximated in $\BB^2$ by functions in $\AP(\R^d)$. Therefore, $\varphi*\phi_\a$ converges to $\varphi$ in $\BB^2$, which implies that 
$\oM(|\varphi-\varphi_*|^2)=0$, as was to be proved.  Finally, the uniqueness of $\varphi_*$, which follows by Parseval's equation, implies that the whole net $\varphi_\a$ converges to $\varphi_*$. 

\end{proof}
 
 Finally, Rudin, in \cite{Ru}, proved that there are functions in  $\WAP(\R^d)$ which are not in  $\operatorname{FS}(\R^d)$, that is, the inclusion  $\operatorname{FS}(\R^d)\subset \WAP(\R^d)$ is strict. 

\section*{Acknowledgements}

The author gratefully acknowledges the support from CNPq, through grant proc.~303950/2009-9, and FAPERJ, through grant E-26/103.019/2011.

\end{document}